\def\det{\mbox{det }}
\newtheorem{lem}{Lemma}
\def\PP{{\mathbb P}}
\def\RR{{\mathbb R}}
\def\ZZ{{\mathbb Z}}
\def\1{{\mathbbm 1}}
\def\det{\operatorname{det}}
\newtheorem*{thm}{Theorem}
\numberwithin{equation}{section}
\numberwithin{lem}{section}
\title{Inelastic Particle Clusters from Cumulative Momenta}
\author{Kevin Chien, Aidan Mager, Laurel Safranek, Jackson Zariski}
\date{}
\begin{document}

\maketitle

\begin{abstract}
    We consider a physical system comprising discrete massive particles on the real line whose trajectories interact via perfectly inelastic collision. It turns out that polygons formed in a ``cumulative momentum diagram'' of the initial conditions allow us to easily predict how many particle clusters form as time $t\to\infty$. We explore an application of this to a unit mass system with $\pm 1$ velocities.
\end{abstract}

\section{Introduction}
Inelastic particles, or \textit{sticky particles}, have been studied in relation to the \emph{pressureless Euler system} in  $(0, \infty) \times \RR^d$:
\begin{equation}\label{eq:pressureless}
    \begin{cases}
      \partial _t \rho + div(\rho v) = 0 \\
      \partial _t (\rho v) + div(\rho v \otimes v) = 0
    \end{cases}
\end{equation}
where $\rho(t,x)$ and $v(t,x)$ are the density and velocity fields of gas particles, respectively \cite{stefano2020sticky}. Note that these equations may be derived from the usual Euler equations for ideal compressible fluids by letting the pressure go to zero \cite{brenier}.
This system governs how an ideal pressureless system of gas particles evolves over time. Such a system has been proposed by Zeldovic \cite{zel1970gravitational} as a simplified model for early formations of galaxies, where individual dust particles floating in space may collide and form newer, bigger particles. Bianchini \cite{stefano2020sticky} proved that this system admits a unique sticky particle solution.

In one dimension, solutions to \eqref{eq:pressureless} can be obtained in the limit of a discrete system of sticky particles (see \cite{hynd2020sticky}) often with an interaction potential. In this paper, we focus on such a discrete system with no potential, where particles move in piecewise linear trajectories. Our main result reframes this system in terms of a cumulative momentum diagram, providing a new geometric picture of the situation that simplifies the determination of how many clusters of particles form. As an application, we use this approach to link the problem of determining how often a system of unit mass, unit speed particles on the line form one cluster to a problem concerning simple symmetric random walks. We also provide the code to implement our cumulative momentum approach.

In Section 2, we provide the necessary definitions and main theorem statement. Section 3 contains the proof of our result. Section 4 uses this approach on our toy problem, and Section 5 gives a Python code sample.

\subsection*{Acknowledgements}
We would like to thank Jayadev Athreya for his support and bringing this topic to our attention; Rowan Rowlands for the helpful reference to \cite{ciucu2016short}; as well as the Washington Experimental Math Lab for facilitating this collaboration.

\section{Preliminaries}
\subsection{Inelastic particle systems}
A \textit{particle} $a:=(m,x,v)\in\RR_{>0}\times\RR\times \RR$ is an ordered triple whose elements denote mass, initial position, and initial velocity, respectively. A \textit{system of $n$ particles} denotes an $n$-tuple of particles $(a_i)_{i=1}^n$ satisfying $x_i\neq x_j$ for $i\neq j$, and ordered such that $x_i<x_j$ if and only if $i<j$.

To each particle $a_i$ in our system, we may associate a unique trajectory $\gamma_i:[0,\infty)\to\RR$, which is a continuous, piecewise linear map satisfying the following properties (see Proposition 2.1 in \cite{hynd2020sticky}):
\begin{enumerate}
    \item $\gamma_i(0)=x_i,\, \dot\gamma(0+)=v_i$;\label{init}
    \item If $\gamma_i(s)=\gamma_j(s)$, then $\gamma_i(t)=\gamma_j(t)$ for all $t\geq s$ \textit{(stickiness)}; and \label{stick}
    \item If $\gamma_{i_1}(t)=\cdots=\gamma_{i_\ell}(t)\neq \gamma_{i}(t)$ at some $t>0$, then \[\dot\gamma_{i_j}(t+)=\frac{\sum_{k=1}^\ell m_{i_k}\dot\gamma_{i_k}(t-)}{\sum_{k=1}^\ell m_{i_k}}\]
    for $i_j\in\{i_1,\ldots,i_\ell\}$ \textit{(conservation of momentum)}.\label{cons}
\end{enumerate}

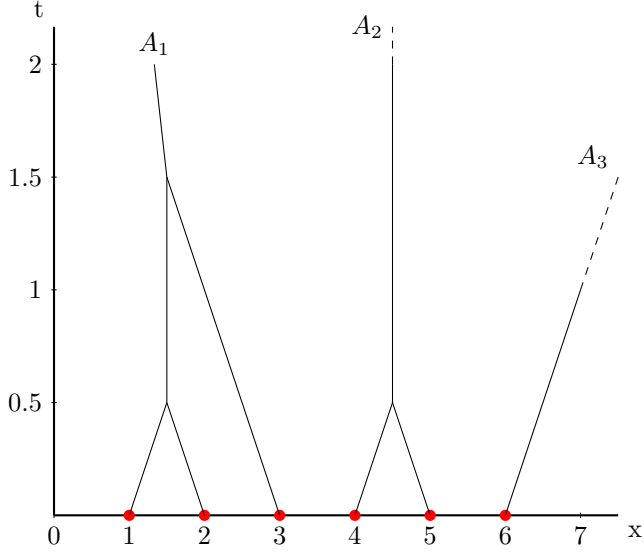
\begin{figure}
\centering
  \begin{tikzpicture}

    \draw[thick,-] (0,0) -- (7.5,0) node[anchor=north west] {x};
    \draw[thick,-] (0,0) -- (0,6.5) node[anchor=south east]{t};
    \foreach \x in {0,1,2,3,4,5,6,7}
    \draw (\x cm,1pt) -- (\x cm,-1pt) node[anchor=north] {$\x$};
    \foreach \x in {1,2,3,4,5,6}
    \draw (\x,0)node[circle,red,fill,inner sep=1.5pt]{};
    \foreach \y in {1.5,3,4.5,6}
    \draw (1pt,\y cm) -- (-1pt,\y cm);
    \draw(-1pt,1.5 cm) node[anchor=east]{$0.5$};
    \draw(-1pt,3 cm) node[anchor=east]{$1$};
    \draw(-1pt,4.5 cm) node[anchor=east]{$1.5$};
    \draw(-1pt,6 cm) node[anchor=east]{$2$};
    \draw (1,0) -- (1.5,1.5) -- (1.5,4.5) --(4/3,6) node[anchor=south]{$A_1$};
    \draw (2,0) -- (1.5,1.5);
    \draw (3,0) -- (1.5,4.5);
    \draw (4,0) -- (4.5,1.5);
    \draw (5,0) -- (4.5,1.5)--(4.5, 6);
    \draw[dashed] (4.5, 6)--(4.5, 6.5)node[anchor= east]{$A_2$};
    \draw (6,0) -- (7,3);
    \draw[dashed] (7,3) -- (7.5,4.5)node[anchor=south east]{$A_3$};
\end{tikzpicture}
\caption{Clusters for unit mass particles with initial velocities $(+1,-1-1,+1,-1,+1)$.}
\label{fig:traj}
\vspace{10pt}
\end{figure} 

In the above, we employ the notation $\dot\gamma(t\pm):=\lim_{h\to0^{\pm}}(\gamma(t+h)-\gamma(t))/h$. We denote $\dot\gamma(t+)$ the velocity of $a_i$ at time $t$: in particular, the velocity exists for all $t\in[0,\infty)$, but may not be continuous.

Observe that the continuity of trajectories and Property \ref{stick} together imply that if particles $a_i,a_j$ collide by time $t$, then all particles $a_k$ for $i\leq k\leq j$ must have collided by time $t$. Also, by expanding the right hand side of the expression in Property \ref{cons}, we can recover the velocity of a particle at any time from the initial conditions of all particles involved in the collision. That is, at a given time $t\in[0,\infty)$, if $J\subset(1,\ldots, n)$ is the subset of indices such that  $\gamma_i(t)=\gamma_j(t)$ for all $j\in J$, then the velocity of $a_i$ is 
\begin{align}
\dot\gamma_{i}(t+)=\frac{\sum_{j\in J} m_{j}v_j}{\sum_{j\in J} m_j}.\label{velo}
\end{align}

Using these trajectories to define an equivalence relation, we can partition our system into \textit{clusters}: two particles $a_i,a_j$ are in the same cluster $A_k$ if and only if $\lim_{t\to\infty}|\gamma_i(t)-\gamma_j(t)|=0$. By our observation above, we may order our clusters $(A_i)_{i=1}^\ell$ such that $i<j$ if and only if the initial position of any particle in $A_i$ is less than the initial position of any particle in $A_j$ (see Figure  \ref{fig:traj}). The velocity of a cluster is the velocity of any constituent particle, for $t$ large enough such that all particles in the cluster share the same trajectory.

\begin{figure}
\centering
  \begin{tikzpicture}
    \draw[thick,-] (0,0) -- (6,0);
    \draw[thick,-] (0,-3.65) -- (0,3.65);
    \foreach \x in {1,2,3,4,5,6}
    \draw (\x cm,1pt) -- (\x cm,-1pt) node[anchor=north] {$\x$};
    \foreach \y in {-3,-1.5,0,1.5,3}
    \draw (1pt,\y cm) -- (-1pt,\y cm);
    \draw(-1pt,-3 cm) node[anchor=east]{$-2$};
    \draw(-1pt,-1.5 cm) node[anchor=east]{$-1$};
    \draw(-1pt,0 cm) node[anchor=east]{$0$};
    \draw(-1pt,1.5 cm) node[anchor=east]{$1$};
    \draw(-1pt,3 cm) node[anchor=east]{$2$};
    
    \draw (0,0)--(1,1.5)node[circle,blue,fill,inner sep=1.5pt]{} -- (2,0)node[circle,blue,fill,inner sep=1.5pt]{} --  (3,-1.5)node[circle,blue,fill,inner sep=1.5pt]{} --
    (4,0)node[circle,blue,fill,inner sep=1.5pt]{} --
    (5,-1.5)node[circle,blue,fill,inner sep=1.5pt]{};
    \draw (6,0)node[circle,blue,fill,inner sep=1.5pt]{};
    \draw[red]  (0,0)--(3,-1.5)--(5,-1.5) -- (6,0);
    \draw (1,.5)node[]{$T_1$};
    \draw (4,-1.25)node[]{$T_2$};
    \draw (5.55,-1.25)node[]{};
    \draw[red] (5.75,-.5)node[anchor=north west]{$\Gamma(r)$};
    \draw (1.2,1.5)node[anchor=north west]{$\Gamma(f)$};
    \draw (5,-1)node[anchor=south]{$T_3$};
\end{tikzpicture}
  \caption{The cumulative momentum diagram for the same system.} 
  \label{fig:sub2}
  \vspace{10pt}
\end{figure}
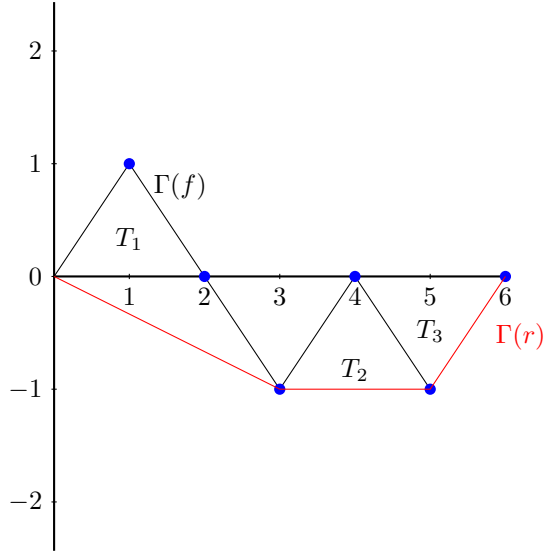
\subsection{Cumulative momentum diagrams and polygons}

Given a system $(a_i)_{i=1}^n$, we now give an associated construction using initial momenta as follows. Let $P_k:=(\sum_{i=1}^km_i,\sum_{i=1}^km_iv_i)$, $P_0:=(0,0)$, and let $L_k$ be the line segment in $\RR^2$ connecting $P_{k-1},P_k$, for $1\leq k\leq n$. We may view $\Gamma(f):=\bigcup_{k=1}^nL_k$ as the graph of a continuous, piecewise linear function $f:[0,\sum_{i=1}^nm_i]\to\RR$, $f(0)=0$.

Recall that the convex envelope of $f$ is defined by 
\[conv(f):=\sup\{g\leq f:\, g\, \text{ is convex}\}.\]
The properties of $f$ imply $r:=conv(f):[0,\sum_{i=1}^nm_i]\to\RR$ is also a continuous, piecewise linear function; let $\Gamma(r)$ denote its graph and $(R_i)$ its linear components of constant slope, ordered by increasing $x$-coordinate of its left endpoint. Observe that $(R_i)$ has monotonically increasing slope due to the convexity of $r$. Together, we refer to $\Gamma(f)\cup\Gamma(r)$ as the \textit{cumulative momentum diagram} of our system. 

For each $R_i$, there are points $(P_{i_k})$, with $P_{i_k}\in R_i$, ordered by increasing $x$-coordinate. A \textit{polygon} $T$ in our diagram refers to either 
\begin{enumerate}[(a)]
    \item The simple polygon bounded by $\Gamma(f)$ and the portion of $R_i$ joining $P_{i_k}, P_{i_{k+1}}$, for $i_{k+1}\neq i_k+1$; or
    \item The line segment $L_{i_{k+1}}$ when $i_{k+1}=i_k+1$ (the ``degenerate'' case).
\end{enumerate}
We can now decompose our diagram into a sequence of polygons $(T_i)_{i=1}^\ell$, ordered using the $x$-coordinate of the vertex (or endpoint) with smallest $x$-coordinate in each polygon. This further induces a partition on our system $(a_i)_{i=1}^n$, where $a_i,a_j\in T_k$ if and only if $L_i,L_j$ belong to an edge of $T_k$. In an abuse of language, the \textit{slope of} $T$ refers to the slope of the linear component $R$ that bounds $T$. See Figure \ref{fig:sub2} for an example diagram, and note $T_3$ is a line segment that counts as a polygon by our rules above.

\subsection{Statement of theorem}
Our main result states that given a system $(a_i)_{i=1}^n$, there is a bijection between ordered clusters $A_i$ and ordered polygons, with $A_i=T_i$.

\begin{thm}\label{main}
Let $(a_i)_{i=1}^n$ be a system whose cumulative momentum diagram decomposes into polygons $(T_i)_{i=1}^\ell$. Then the system partitions into clusters $(A_i)_{i=1}^\ell$ with $A_j=T_j$, and the velocity of $A_j$ is given by the slope of $T_j$.
\end{thm}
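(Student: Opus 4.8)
My plan is to induct on the number of particles $n$, using the first collision to collapse the system to a smaller one while leaving the convex envelope — and hence the entire polygon decomposition — essentially unchanged. The engine of the argument is the elementary translation between geometry and dynamics: the slope of the segment $L_k$ is exactly $v_k$, and more generally the slope of the chord from $P_j$ to $P_k$ is the center-of-mass velocity $\frac{\sum_{i=j+1}^{k} m_i v_i}{\sum_{i=j+1}^{k} m_i}$ of the intervening particles. Consequently an adjacent pair is ``approaching'' (the left particle faster) precisely when $v_i>v_{i+1}$, which is exactly the condition that $P_i$ is a strict upward kink of $f$, i.e. a vertex lying strictly above $\operatorname{conv}(f)$. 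This is what ties collisions to non-convexity.

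The base case $n=1$ is immediate. For $n\ge 2$, if no two particles ever collide then every adjacent pair satisfies $v_i\le v_{i+1}$, so $f$ is already convex, $f=r$, each $L_k$ is a degenerate polygon, and each particle is its own cluster moving at velocity $v_k=\,$slope of $L_k$; the statement holds. Otherwise let $t_1>0$ be the first collision time, which is well defined since before any collision all trajectories are linear and only finitely many approaching adjacent pairs can meet. At $t_1$ one or more maximal runs of consecutive particles coalesce at single points; because the trajectories are linear on $[0,t_1]$ and the starting positions are strictly increasing, the initial velocities within each coalescing run are \emph{strictly decreasing} (the leftmost must be fastest to catch the rest), so each run corresponds to a strictly concave sub-polyline of $f$.

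The crux is the reduction step. I replace each coalescing run by a single particle carrying its total mass and its center-of-mass velocity (the common post-collision velocity given by \eqref{velo}). First I would check by direct computation that the reduced system's cumulative momentum diagram is obtained from the original one simply by \emph{deleting the interior vertices} $P_i$ of each coalescing run: the cumulative mass and momentum just after a merged run agree with the original point $P$ at the run's right endpoint. Then I would show this deletion does not change $\operatorname{conv}(f)$. Each deleted $P_i$ lies strictly above $\operatorname{conv}(f)$: since a coalescing run is strictly concave, each of its interior vertices lies strictly above the chord joining the run's two endpoints, and because a convex function lies below its chords while $\operatorname{conv}(f)\le f$ at those endpoints, $\operatorname{conv}(f)$ at that abscissa is at most the chord value, hence strictly below $P_i$. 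Therefore the envelope segments $R_i$, their touching points (where $f=r$), and thus the whole polygon decomposition are preserved; moreover each coalescing run sits inside a single polygon, since a touching point strictly interior to the run's abscissa range would contradict the run's interior vertices lying strictly above $\operatorname{conv}(f)$.

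Finally I would lift the conclusion back. The reduced system has fewer than $n$ particles, its positions at $t_1$ are still distinct and ordered (trajectories cannot cross without sticking), and its trajectories for $t\ge t_1$ coincide with those of the original system; so the induction hypothesis applies, giving reduced clusters $=$ reduced polygons with velocity $=$ slope. Expanding each merged particle back into its coalescing run — whose members share a trajectory for $t\ge t_1$ and hence lie in one cluster — yields that the original clusters equal the original polygons. The velocity of a cluster is the center-of-mass velocity of its block, which by the chord-slope computation is exactly the slope of the bounding segment $R$, i.e. the slope of the polygon, closing the induction. The hard part, I expect, is the reduction lemma: proving that the first-collision vertices are precisely the non-envelope vertices and that removing them leaves $\operatorname{conv}(f)$ literally unchanged, while cleanly accommodating simultaneous collisions and runs of three or more particles meeting at a single instant.
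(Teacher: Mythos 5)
Your proposal is correct, but it takes a genuinely different route from the paper's. The paper inducts on the number of \emph{polygons} $\ell$: its key ``cutting'' lemma (Lemma~\ref{key}) shows that the rightmost particle of $T_i$ can never catch the leftmost particle of $T_{i+1}$, because at any time both velocities are chord slopes of the diagram, bounded respectively above and below by the envelope slopes $R\le \tilde R$; polygons therefore evolve in isolation, and the single-polygon base case is dispatched by a static momentum-conservation squeeze (all cluster velocities $\ge R$, total momentum $= R\sum m_k$, forcing one cluster of velocity $R$). You instead induct on the number of \emph{particles} $n$, with time playing an active role: you merge the particles involved in the first collision and prove that the convex envelope, hence the whole polygon decomposition, is invariant, because each coalescing run is a strictly concave sub-polyline whose interior vertices lie strictly above $conv(f)$, and replacing it by its chord leaves a function $f'$ with $conv(f)\le f'\le f$, so $conv(f')=conv(f)$. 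Both arguments rest on the same elementary translation (chord slope equals center-of-mass velocity, as in \eqref{velo}), but they buy different things. The paper's proof is shorter and entirely static: it never needs collision times, simultaneous collisions, or a restart of the dynamics. Yours proves a stronger dynamical fact --- $conv(f)$ is a conserved quantity of the flow, each collision merely deleting non-envelope vertices --- which is precisely the correctness statement for the recursive \texttt{polygonAnalysis} algorithm in Section 5, and it more closely mirrors classical sticky-particle convex-hull constructions. The price of your route, which you correctly flag, is the bookkeeping in the reduction lemma (simultaneous collisions, runs of three or more particles, no envelope-touching point interior to a run) together with an appeal to uniqueness/consistency of the dynamics at $t_1$ (that the merged system's trajectories coincide with the original ones for $t\ge t_1$, via the cited Proposition 2.1 of \cite{hynd2020sticky}); these are real but routine obligations, and your sketch already handles the essential points correctly.
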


\section{Proof of main result}

The key step in showing our main result is the ability to consider polygons in isolation, where particles outside a given polygon never interact with particles within. This is the statement of the following ``cutting'' lemma, which utilizes the monotonicity of $(R_i)$:

\begin{lem}\label{key}
Particles in $T_i$ never collide with particles in $T_\ell$ for $i\neq \ell$.
\end{lem}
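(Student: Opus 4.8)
The plan is to reduce the statement to a non-collision claim about a single pair of adjacent ``boundary'' particles, and then to contradict the order-preservation of trajectories using the monotonicity of the slopes of $(R_i)$. First I would record two elementary consequences of continuity together with Property~\ref{stick}. Trajectories are order-preserving, $\gamma_1(t)\le\cdots\le\gamma_n(t)$ for all $t$, since two trajectories that crossed would first have to coincide and would then stick, so they never cross strictly. Collisions also propagate inward: if $a_p$ and $a_q$ collide by time $t$, then so does every $a_k$ with $p\le k\le q$. Writing $0=m_0<m_1<\cdots<m_\ell=n$ for the indices with $P_{m_a}\in\Gamma(r)$, the polygon $T_a$ consists of the particles $a_{m_{a-1}+1},\ldots,a_{m_a}$ and has slope $s_a$, with $s_1\le\cdots\le s_\ell$ by convexity of $r$ (equality being possible when two polygons share a linear component $R_i$). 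The propagation property reduces the lemma to the local claim that for every interior contact index $m:=m_a$ the particles $a_m$ and $a_{m+1}$ never collide: a collision between $a_p\in T_i$ and $a_q\in T_\ell$ with $i<\ell$ would force $a_{m_i}$ and $a_{m_i+1}$ to collide, since $p\le m_i<q$.

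Next I would extract the velocity estimates that drive the argument from the supporting lines of the convex function $r$ at the contact point $P_m$. Writing $M_k:=\sum_{i\le k}m_i$, the left slope of $r$ at $M_m$ is $s_a$ and the right slope is $s_{a+1}$, so $r(x)\ge r(M_m)+s_a(x-M_m)$ and $r(x)\ge r(M_m)+s_{a+1}(x-M_m)$ hold for all $x$. Evaluating at the vertices $P_i=(M_i,f(M_i))$, using $r\le f$ and $f(M_m)=r(M_m)$, yields for every $i<m$ and every $i'>m$ the center-of-mass bounds
\[
\frac{\sum_{k=i+1}^{m}m_kv_k}{\sum_{k=i+1}^{m}m_k}\;\le\;s_a\;\le\;s_{a+1}\;\le\;\frac{\sum_{k=m+1}^{i'}m_kv_k}{\sum_{k=m+1}^{i'}m_k}.
\]
In words, every block of consecutive particles ending at $m$ has center-of-mass velocity at most $s_a$, and every block beginning at $m+1$ has center-of-mass velocity at least $s_{a+1}$.

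Finally I would argue by contradiction. Suppose $t^*:=\inf\{t\ge 0:\gamma_m(t)=\gamma_{m+1}(t)\}$ is finite. Since at most $n-1$ collisions occur in total, velocities are constant on some interval $(t^*-\varepsilon,t^*)$; on it the minimality of $t^*$ and order-preservation force the cluster of $a_m$ to be a block $\{a_{i+1},\ldots,a_m\}$ and that of $a_{m+1}$ to be a block $\{a_{m+1},\ldots,a_{i'}\}$, neither having crossed the boundary. By \eqref{velo} their constant velocities $u_L,u_R$ equal the corresponding center-of-mass velocities, so the estimates above give $u_L\le s_a\le s_{a+1}\le u_R$. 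As $\gamma_m(t^*)=\gamma_{m+1}(t^*)$ and both trajectories are affine with slopes $u_L,u_R$ on $(t^*-\varepsilon,t^*)$, we get $\gamma_{m+1}(t)-\gamma_m(t)=(u_R-u_L)(t-t^*)\le 0$ there; but $t<t^*$ and the definition of $t^*$ force $\gamma_m(t)<\gamma_{m+1}(t)$, a contradiction. Hence the boundary pair never meets, which establishes the lemma.

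The step I expect to be the main obstacle is the bookkeeping at $t^*$: I must be sure that the two colliding clusters are genuinely consecutive blocks meeting at $m$ for the first time, so that \eqref{velo} identifies their velocities with center-of-mass velocities to which the envelope estimates apply. A secondary point requiring care is that the slope inequality $s_a\le s_{a+1}$ is only weak when the two adjacent polygons lie on a common component $R_i$; the argument is arranged so that weak monotonicity already suffices, since for $t<t^*$ the separation $\gamma_{m+1}(t)-\gamma_m(t)$ is strictly positive while $(u_R-u_L)(t-t^*)$ is nonpositive.
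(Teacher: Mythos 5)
Your proof is correct and takes essentially the same route as the paper's: reduce to the adjacent boundary pair $a_m, a_{m+1}$, express their velocities via \eqref{velo} as chord slopes of the cumulative momentum diagram, and bound those chords by the polygon slopes using convexity of $r$, the fact that $P_m\in\Gamma(r)$, and monotonicity of the slopes. The only difference is presentational --- the paper asserts the velocity inequality $\dot\gamma_m(t+)\le\dot\gamma_{m+1}(t+)$ directly for all pre-collision times and leaves the ``hence no collision'' step implicit, whereas you package the same estimates as a contradiction at the first collision time $t^*$, making explicit the order-preservation and weak-inequality details that the paper glosses over.
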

\begin{proof}
By our observation following Property \ref{stick}, it suffices to show that $\dot\gamma_{j}(t+)\leq \dot\gamma_{j+1}(t+)$ for all $t\geq 0$, where $j$ is the index of the particle $a_j\in T_i$ with largest $x$-coordinate. Observe that until $a_j,a_{j+1}$ collide, their respective velocities at any time are given by 
\begin{align}
    \dot\gamma_{j}(t+)&=\frac{\sum_{k=0}^{n_1-1}m_{j-k}v_{j-k}}{\sum_{k=0}^{n_1-1}m_{j-k}}\label{left}\\ 
    \dot\gamma_{j+1}(t+)&=\frac{\sum_{k=1}^{n_2}m_{j+k}v_{j+k}}{\sum_{k=0}^{n_2}m_{j+k}} \label{right}
\end{align}
for $n_1,n_2\geq 1$, according to \eqref{velo}. The right hand side of \eqref{left} is the slope of the line passing through $P_{j-n_1}$ and $P_{j}$, which is sharply bounded above by $R$, the slope of $T_i$; this follows from $P_j\in\Gamma(r)$ and the convexity of $r$. Similarly, the right hand side of \eqref{right} is the slope of the line passing through $P_{j}$ and $P_{j+n_2}$, which is sharply bounded below by $\tilde R$, the slope of $T_{i+1}$ (see Figure \ref{fig:pf}). Due to monotonicity of polygon slopes, we know $R\leq \tilde R$, and therefore $\dot\gamma_{j}(t+)\leq \dot\gamma_{j+1}(t+)$ as needed.
\end{proof}

\begin{figure}
\centering
\begin{tikzpicture}
\draw (-5,3)node[circle,fill,inner sep=1.5pt]{}--(-4, 4)node[circle,fill, blue, inner sep=1.5pt]{}--(-3.5, 3.25)--(-2.5, 4.5)--(0,0)node[circle,blue,fill,inner sep=1.5pt]{}--(2, 3)--(2.5, 1.5)node[circle,fill,blue,inner sep=1.5pt]{}--(3,0)node[circle,fill,inner sep=1.5pt]{};    
\draw[dashed](-4,4)node[above,blue]{$P_{j-n_1}$}--(0,0)node[below,blue]{$P_{j}$}--(2.5, 1.5)node[right,blue]{$P_{j+n_2}$};
\draw[red](-5,3)--(0,0)--(3,0);
\end{tikzpicture}
\caption{}\label{fig:pf}
\vspace{10pt}
\end{figure}

\noindent\textbf{Proof of Theorem: } We proceed by induction. Suppose our system partitions into one polygon $T$ with slope $R=\frac{\sum_{k=1}^nm_kv_k}{\sum_{k=1}^nm_k}$. Let $(A_i)_{i=1}^N$ be the clusters  of our system, and denote the velocity of $A_i$ by $W_i$. If $J$ is the maximal collection of indices of particles belonging to $A_i$, let $M_i:=\sum_{k\in J}m_k$. 

Because these clusters do not collide, we have $W_i\leq W_{i+1}$. The velocity $W_1$ is given by  \eqref{right} with $j=0$, which by the reasoning in the proof of Lemma \ref{key} gives $W_1\geq R$. Combined with conservation of momentum, this yields
\[\sum_{k=1}^n m_kv_k=\sum_{j=1}^NM_jW_j\geq W_1\sum_{j=1}^NM_j\geq R\sum_{k=1}^nm_k=\sum_{k=1}^n m_kv_k.\]
It follows immediately that $W_1=R$ and $N=1$.

Assume our theorem holds for $\ell-1$ polygons, and consider a system partitioned into $\ell$ polygons. Let $a_j$ be the particle with largest $x$-coordinate belonging to the $T_{\ell-1}$. By Lemma \ref{key}, particles in $\bigcup_{i=1}^{\ell-1}T_i$ do not collide with particles in $T_{\ell}$. Thus, the first $\ell-1$ polygons may be considered in isolation from the rest of the system, so by our inductive hypothesis, $(a_i)_{i=1}^j$ partitions into $\ell-1$ clusters with $A_j=T_j$. We are then left with the particles in $T_\ell$, which upon applying our base case reasoning corresponds to the single cluster $A_\ell$. \qedsymbol

\section{Application to a toy problem}
Consider the following problem: Given a system of $n$ particles of unit mass with velocities taking values in $\{\pm 1\}$, we want to determine the probability of one cluster forming. By looking at the associated cumulative momentum diagrams, this is equivalent to summing over the probabilities that a simple symmetric random walk 
(SSRW) on $\mathbb{Z}$ starting at $0$ and ending at $c\in\mathcal{V}$ stays strictly above $\frac{c}{n}j$ for each step $0<j<n$, where $\mathcal{V}:=\{-n\leq c\leq n: c=n-2k, k\in\ZZ\}$ is the range of possible total velocities of the $n$ particles.

This type of problem appears in combinatorics in relation to the classic ballot problem and counting North-East lattice paths -- or \textit{staircase walks} -- on $\ZZ^2$ (see Chapters 1, 2 in \cite{10.3138/j.ctvfrxdw5}). To see this, observe that each SSRW satisfying our needs above corresponds to a lattice path on $\ZZ^2$ starting at the origin and ending at $(x,y)=((n-c)/2,(n+c)/2)$ that stays strictly above the line of slope $y=x((n+c)/(n-c))$; here, each unit step in the $+x$ (resp. $+y$)-direction corresponds to a $-1$ (resp. $+1$) step in our SSRW.

\subsection{Young diagrams and Narayana's path counting}
In order to approach the situation above, we briefly recall Young diagrams and a useful path counting formula. Given $\lambda=(\lambda_1,\ldots, \lambda_k)\in\ZZ_+^k$ with $\lambda_i\geq\lambda_{i+1}$ the \textit{Young diagram of shape $\lambda$} comprises left justified rows with row $i$ consisting of $\lambda_i$ unit boxes.

Let us introduce the notation 
\[{\binom{n}{k}}_+:=\begin{cases}
    {\binom{n}{k}}& \text{\, if\, }k\geq 0;\\
    0 & \text{\, if\, } k<0.\\
\end{cases}\]

\begin{figure}
\begin{tikzpicture}

    \draw[very thin] (0,0)node[circle,black,fill,inner sep=1.5pt]{}--(0,5)--(4,5)node[circle,blue,fill,inner sep=1.5pt]{}--(4,4)--(2,4)--(2,3)--(1,3)--(1,2)--(0,2);
    \draw[very thin] (1,3)--(1,5);    
    \draw[very thin] (0,4)--(2,4);
    \draw[very thin] (0,3)--(1,3);
    \draw[very thin] (2,4)--(2,5);
    \draw[very thin] (3,4)--(3,5);
    \draw (0,1)node[circle,black,fill,inner sep=.5pt]{};
    \draw[dashed, red] (0,0)--(0,2)--(1,2)--(1,4)--(2,4)--(2,5)--(4,5);
\end{tikzpicture}
    \centering
    \caption{Example of a North-East lattice path on the Young diagram of shape $\lambda=(4,2,1,0,0)$.}
    \label{fig:expath}
    \vspace{10pt}
\end{figure}

We know the following formula:
\begin{thm}[Path Counting, \cite{ciucu2016short} \cite{BURO_1965__6__9_0}]
The number $N(\lambda)$ of North-East lattice paths joining the southwest corner to the northeast corner of a Young diagram of shape $\lambda$ is given by
\begin{align}\label{narpath}
    N(\lambda)=\det \left({\binom{\lambda_j+1}{j-i+1}}_+\right)_{1\leq i,j\leq k}
\end{align}
\end{thm}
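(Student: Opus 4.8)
The shape of the right-hand side of \eqref{narpath} — a signed sum over permutations of products of lattice-path counts — is exactly the signature of the Lindström--Gessel--Viennot (LGV) lemma for non-intersecting lattice paths. The plan is therefore to realize $\det(\binom{\lambda_j+1}{j-i+1}_+)$ as the LGV determinant of a matrix $M=(M_{ij})$ whose entry $M_{ij}$ counts monotone North-East lattice paths between a suitably chosen source $A_i$ and sink $B_j$, and then to biject the non-intersecting path families that this determinant counts with the single NE paths enumerated by $N(\lambda)$. Concretely, I would first recall the LGV lemma: for sources $A_1,\dots,A_k$ and sinks $B_1,\dots,B_k$ in an acyclic directed lattice, $\det(M_{ij})=\sum_\sigma \operatorname{sgn}(\sigma)\,\#\{\text{pairwise non-intersecting families } A_i\to B_{\sigma(i)}\}$, which collapses to the positive count of non-intersecting families $\{A_i\to B_i\}$ whenever the identity is the only permutation admitting such a family.

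Next I would select the coordinates $A_i=(-i,\,i)$ and $B_j=(\lambda_j-j,\,j+1)$, with unit steps allowed only in the $+x$ and $+y$ directions. A path $A_i\to B_j$ then uses $j-i+1$ North steps and $\lambda_j-j+i$ East steps, so the number of such paths is $\binom{\lambda_j+1}{j-i+1}$ when $j-i+1\ge 0$ and there are none when $j-i+1<0$; the remaining vanishing cases (negative East count) are killed automatically by the standard $\binom{a}{b}=0$ for $b>a$ convention. Hence $M_{ij}=\binom{\lambda_j+1}{j-i+1}_+$, matching the matrix in the statement exactly.

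The crucial observation is that $B_i$ lies one level above $A_i$, so every path $\pi_i:A_i\to B_i$ makes \emph{exactly one} North step: it runs East at height $i$ to some position $t_i$, steps up, then runs East at height $i+1$, where $-i\le t_i\le \lambda_i-i$. Consecutive paths $\pi_i,\pi_{i+1}$ share only the line $y=i+1$, and a one-line check shows they avoid each other precisely when $t_i>t_{i+1}$. Writing $s_i:=t_i+i\in[0,\lambda_i]$, non-intersecting families thus correspond to integer sequences $\lambda_i\ge s_i\ge 0$ with $s_1\ge s_2\ge\cdots\ge s_k$, i.e.\ to sub-partitions $\mu\subseteq\lambda$. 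On the other hand, a single NE path from the southwest to the northeast corner of the diagram of $\lambda$ cuts off beneath it a sub-partition $\mu\subseteq\lambda$, and this too is a bijection. Consequently both $N(\lambda)$ and the LGV family count equal $\#\{\mu\subseteq\lambda\}$. It remains to verify identity-dominance: since the sources $(-i,i)$ and the sinks $(\lambda_j-j,\,j+1)$ are each strictly decreasing in $x$ and increasing in $y$ as the index grows (using $\lambda_i\ge\lambda_{i+1}$), any inversion forces the two relevant paths to cross, so only $\sigma=\mathrm{id}$ contributes and $\det M=\#\{\text{non-intersecting families}\}=N(\lambda)$.

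\textbf{Main obstacle.} The real work lies in the setup rather than in any computation: one must pin down the source/sink coordinates so that simultaneously (i) the path counts come out as the stated $(\cdot)_+$-binomials and (ii) the LGV identity-dominance hypothesis holds, and then carry out the non-crossing bookkeeping that turns a family into the strictly ordered jump positions $t_1>\cdots>t_k$. The reduction to paths with a single North step is what makes both the entry identification and the bijection transparent; the residual re-indexing between the two path models (family versus single diagram path) is routine and I would not belabor it.
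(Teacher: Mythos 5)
The paper never proves this theorem: it is quoted as a known result, with the proof outsourced to the cited references (Kreweras, and Ciucu's short proof). So there is no internal argument to compare against, and your proposal has to be judged on its own merits — and it checks out. Your source/sink choice $A_i=(-i,i)$, $B_j=(\lambda_j-j,\,j+1)$ does give exactly $\binom{\lambda_j+1}{j-i+1}_+$ paths from $A_i$ to $B_j$ (with the case $j-i+1>\lambda_j+1$ killed by the usual convention, as you note); the one-north-step structure makes the non-intersecting condition collapse to $t_1>t_2>\cdots>t_k$, i.e.\ $s_1\ge\cdots\ge s_k$ with $0\le s_i\le\lambda_i$, so non-intersecting families biject with sub-partitions $\mu\subseteq\lambda$; and a single NE path from the southwest to the northeast corner of $\lambda$ is likewise determined by the heights at which its north steps occur, which form exactly such a sub-partition. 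The identity-dominance claim is the standard one: for an inversion, one path starts strictly northwest and ends strictly southeast of the other, and two monotone NE lattice paths in that configuration must share a lattice point (worth stating explicitly, since LGV's path-switching involution needs a common vertex, not merely a geometric crossing — but for unit-step NE paths this is automatic). This is, in fact, essentially the argument of the cited short proof: the content of the theorem is precisely that the Narayana-type determinant is an LGV determinant for a family of one-row paths. Your write-up would serve as a self-contained substitute for the citation; the only detail I would tighten is the orientation bookkeeping in the sentence about the path ``cutting off beneath it'' a sub-partition, which as written conflates the region left of the path with its complement, though either choice yields the needed bijection.
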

Observe that the matrix that we are taking the determinant of in \eqref{narpath} has all ones on the main subdiagonal and zeros all below that. Also, if we extend the notion of Young diagram to tuples $\lambda\in\ZZ_{\geq 0}^k$ with the zero entries corresponding to rows with left-aligned single points instead of boxes, equation  \eqref{narpath} still holds due to determinant properties of block lower-triangular matrices; in this case, $N(\lambda)$ counts the paths on the diagram given by the corresponding Young diagram for nonzero entries of $\lambda$ with the additional vertical line segment joining the single points (Figure \ref{fig:expath}). 

\subsection{Solution to the problem}

Observe that for $c=\pm n,\pm (n-2)$, the probabilities of one cluster forming are $0$ and $(1/2)^n$ respectively. For $c\in\mathcal{V}$ that are not those values, the number of $\ZZ^2$ lattice paths from the origin to $((n-c)/2,(n+c)/2)$  staying above $y=x((n+c)/(n-c))$ is given by $N(\lambda)$ for 
\[\lambda=\left(\left\lceil\left(\frac{n-c}{n+c}\right)\left(\frac{n+c}{2}-j\right)-1\right\rceil\right)_{j=1}^{ \frac{n+c}{2}-1}.\]
This $\lambda$ is the shape of the maximal Young diagram that fits inside  $[0,(n-c)/2]\times [0,(n+c)/2]$ and stays strictly above $y=x((n+c)/(n-c))$.
Plugging in $c=-n+2k$ for $2\leq k\leq n-2$, we find the probability of one cluster forming to be
\[(1/2)^{n-1}+\frac{1}{2^n}\sum_{k=1}^{n-1}\det \left({\binom{\left\lceil\left(\frac{n-k}{k}\right)\left(k-j\right)-1\right\rceil+1}{j-i+1}}_+\right)_{1\leq i,j\leq k-1}.\]
Noting in this case that the number of permissible lattice paths from the origin to $(p,q)$ is the same as that for $(q,p)$ (see Figure \ref{fig:symmetry}), we may rewrite the above slightly and conclude that the probability of one cluster forming for $n>2$ even (resp. odd) is:
\begin{align*}
    \frac{1}{2^{n-1}} +\frac{1}{2^{n}}\det \left({\binom{\left\lceil (n/2)-j-1\right\rceil+1}{j-i+1}}_+\right)_{1\leq i,j\leq (n/2)-1} \\
    +\frac{1}{2^{n-1}}\sum_{k=2}^{n/2-1}\det \left({\binom{\left\lceil\left(\frac{n-k}{k}\right)\left(k-j\right)-1\right\rceil+1}{j-i+1}}_+\right)_{1\leq i,j\leq k-1}
\end{align*}
    \[\left[resp.\,\hspace{5pt} \frac{1}{2^{n-1}}    +\frac{1}{2^{n-1}}\sum_{k=2}^{\frac{n-1}{2}-1}\det \left({\binom{\left\lceil\left(\frac{n-k}{k}\right)\left(k-j\right)-1\right\rceil+1}{j-i+1}}_+\right)_{1\leq i,j\leq k-1}\right]\]
\begin{figure}
\begin{tikzpicture}

    \draw[thick,-] (0,0) -- (6.5,0) ;
    \draw[thick,-] (0,0) -- (0,6.5) ;
    \foreach \x in {0,1,2,3,4,5,6}
    \draw (\x cm,1pt) -- (\x cm,-1pt) node[anchor=north] {$\x$};
    \foreach \y in {1,2,3,4,5,6}
    \draw (1pt,\y cm) -- (-1pt,\y cm)node[anchor=east] {$\y$};
    \foreach \i in {0,...,6} {
    \draw [very thin,gray] (\i,0) -- (\i,6) ;
    }
    \foreach \i in {0,...,6} {
    \draw [very thin,gray] (0,\i) -- (6,\i);
    }
    \draw (0,0)node[circle,fill,inner sep=1.5pt]{}-- (3,5)node[circle,blue,fill,inner sep=1.5pt]{}node[anchor=south] {$(3,5)$};
    \draw (0,0) -- (5,3)node[circle,blue,fill,inner sep=1.5pt]{}node[anchor=south] {$(5,3)$};
    \draw [red] (0,2)--(0,5)--(2,5)--(2,4)--(1,4)--(1,2)--(0,2);
    \draw [red] (0,3)--(1,3);
    \draw [red] (0,4)--(1,4);
    \draw [red] (1,4)--(1,5);
    
    \draw [dashed] (0,1)--(0,3)--(3,3)--(3,2)--(1,2)--(1,1)--(0,1);
    \draw [dashed] (0,2)--(1,2);
    \draw [dashed] (1,2)--(1,3);
    \draw [dashed] (2,2)--(2,3);
    
\end{tikzpicture}
    \centering
    \caption{}
    \label{fig:symmetry}
    \vspace{10pt}
\end{figure}

\subsection{Some remarks on asymptotics}
We conclude this section with a brief discussion on the asymptotics of the probability that one cluster forms from our unit mass and speed system as the number of particles $n\to \infty$. Our approach to find the above formula does not appear to be conducive in extracting asymptotics due to the complicated determinant terms arising from path-counting. Some relevant asympototics for path-counting below a certain class of lines of rational slope can be found in \cite{banderier2019kernel}, though the techniques there do not seem to easily generalize.

Note that for a SSRW $(S_k)_{k=1}^{2n}$ starting at $0$, the probability $S_k$ remains strictly positive at each step given the walk ends at $0$ is
\[\PP\left(\bigcap_{1<k<2n}(S_k>0)|S_{2n}=0\right)=\left(\frac{1}{2}\right)^{2n}C_{n-1}=\left(\frac{1}{2}\right)^n\frac{1}{n}\binom{2n-2}{n-1}\sim \frac{e}{2\sqrt{\pi}}\left(\frac{1}{\sqrt{n}}\right),\]
where $C_k$ denotes the $k$-th Catalan number. Applying Stirling's formula once again to find
\[\PP(S_{2n}=0)\sim\frac{1}{\sqrt{\pi}}\left(\frac{1}{\sqrt{n}}\right),\]
we get
\[\PP\left(\bigcap_{1<k<2n}(S_k>0),S_{2n}=0\right)\sim \frac{e}{2\pi}\left(\frac{1}{n}\right).\]

Heuristically, because we expect most of our walks to satisfy $S_{2n}=c$ for small values of $c$ with high probability, we conjecture that summing over these possibilities leads to a similar asymptotic of $\sim C/n$.
\section{Computational applications}
We seek an algorithm that takes in a sequence of original particles as a list of $(M_i,V_i)$ pairs, and outputs a similar list of clusters after all collisions occur. An outline for one implementation of this code utilizing recursion is as follows:
\begin{enumerate}
    \item Begin with an initial cumulative momentum diagram for the entire sequence. For each point on the diagram in which the slope between it and its previous point is less than or equal to the slope between it and the following point, add said point to the list. End the list with the final point of the diagram.
    \item Recursively repeat the previous step with just these points acting as a revised, shortened momentum diagram. The terminating condition for said recursion occurs when this process no longer produces a shortened diagram. Take this constant result as the final diagram.
    \item The line segments between consecutive points in this final diagram represent the final polygons in the diagram--the slopes of the segments are the velocities of the final particles while the horizontal magnitude of the line is the mass in the unit case.
\end{enumerate}

This algorithm results in a method of determining the final state of coalescence of a sequence in a much more streamlined manner than the brute-force method involving purely the physical calculations. This proves to be especially helpful in the construction of probability density functions for the state of an $n$-length system as time goes to infinity. In this type of problem, we seek to find the probabilities of there being $k\leq n$ final clusters in an $n$-length unit-mass system when assigned random velocities of $\pm 1$. For low values of $n$ one can fairly easily compute these functions by hand, as each $n$-length sequence encompasses a total of $2^{n}$ possible configurations. However, to analyze enough of the lower $n$-length sequences, this exponential growth in the size of the input requires an algorithm that can save as much time as possible when compared to that using purely physical calculations. Following trials on randomly generated sets of sequences, algorithms based on analyzing momentum diagrams with the polygon theorem as as a basis achieves this goal; and, in addition, run with more economical memory usage as well. An example of Python code utilizing this algorithm to find the final state of an arbitrary sequence of particles is below.
\begin{verbatim}

# Takes two [x,y] points as input
# Returns slope between both points
def slope(first, second):
    rise = second[1] - first[1]
    run = second[0] - first[0]
    return rise/run

# Takes a list of all [x,y] points of a momentum diagram as input
# Returns only the points used as nodes in said momentum diagram
def polygonAnalysis(seq):
    if len(seq) < 2:
        return seq
       
    else:
        mins = [seq[0]]
        curr= seq[1]
        currIndex = 1
        while currIndex < len(seq) - 1:
            prev = seq[currIndex - 1]
            next = seq[currIndex + 1]
            local_Min = False
            if slope(prev, curr) <= slope(curr, next):
                local_Min = True
            if local_Min:
                mins.append(seq[currIndex])
            currIndex = currIndex + 1
            curr = seq[currIndex]

    
        mins.append(seq[len(seq) - 1])
        # Algorithm terminates when no further reductions occur
        if mins == seq:
            return mins
        else:
            return polygonAnalysis(mins)

# Takes an original sequence of particles in the form [mass, velocity] as input 
# Returns the final list of particles after all collisions ordered by position.
def findFinalTree(origSequence):
    origParticles = []
    index = 0
    for item in origSequence:
        origParticles.append(item)
        index = index + 1
    
    numStart = 0
    numDown = 0

    curr_x = numStart
    curr_y = numDown

    rldList = [[curr_x, curr_y]]
    for item in origParticles:
        curr_x = curr_x + item[0]
        curr_y = curr_y + (item[1] * item[0])
        rldList.append([curr_x, curr_y])

    middleParticles = polygonAnalysis(rldList) # Recursive step
    currMIndex = 0
    finalParticles = []
    while currMIndex < len(middleParticles) - 1:
        firstParticle = middleParticles[currMIndex]
        secondParticle = middleParticles[currMIndex + 1]
        mass = secondParticle[0] - firstParticle[0]
        finalParticles.append([mass, slope(firstParticle, secondParticle)])
        currMIndex = currMIndex + 1

    return [finalParticles]


# Runs program, prints to terminal list of final particles after all collisions    
origSequence = ANY_ARBITRARY_PARTICLE_LIST
print(findFinalTree(origSequence))
    
\end{verbatim}

\printbibliography
\end{document}